\theoremstyle{plain}
\newtheorem{theorem}{Theorem}
\newtheorem{lemma}[theorem]{Lemma}
\theoremstyle{definition}
\newcommand{\define}[1]{\emph{#1}}
\newcommand{\bitem}{\begin{itemize}}
\newcommand{\eitem}{\end{itemize}}
\newcommand{\benum}{\begin{enumerate}}
\newcommand{\eenum}{\end{enumerate}}
\newcommand{\ZZ}{\mathbb{Z}}
\def\volno{16}\fi
\def\volyear{2009}\fi
\def\papno{R00}\fi
\newfont{\footsc}{cmcsc10 at 8truept}
\newfont{\footbf}{cmbx10 at 8truept}
\newfont{\footrm}{cmr10 at 10truept}
\begin{document}

\title{A Generalization of Plexes of Latin Squares}
\author{Kyle Pula \\
\small Department of Mathematics\\[-0.8ex]
\small University of Denver, Denver, CO, USA\\
\small \texttt{jpula@math.du.edu}
}

\maketitle

\begin{abstract}
A \define{$k$-plex} of a latin square is a collection of cells representing each row, column, and symbol precisely $k$ times. The classic case of $k=1$ is more commonly known as a \define{transversal}. We introduce the concept of a \define{$k$-weight}, an integral weight function on the cells of a latin square whose row, column, and symbol sums are all $k$. We then show that several non-existence results about $k$-plexes can been seen as more general facts about $k$-weights and that the weight-analogues of several well-known existence conjectures for plexes actually hold for $k$-weights.
\end{abstract}

{\em Keywords:} latin square, transversal, duplex, k-plex.

{\em 2000 AMS Subject Classification:} 05B15.

\section{Introduction \& Background} \label{SEC:Introduction}

We call an integral weight function on the cells of a latin square a \define{$k$-weight} if the sum over each row, column, and symbol is $k$. The primary purpose of this paper is to show that several important non-existence results and existence conjectures about $k$-plexes hold in the much weaker setting of $k$-weights. Our hope, therefore, is that $k$-weights may prove to be a useful generalization to better understand $k$-plexes.

In \S \ref{SEC:DeltaLemma} we establish a simple lemma that is employed in several of our arguments. In \S \ref{SEC:ExistNonExist} we generalize a non-existence result of Wanless for odd-plexes to show that certain latin squares have no odd-weights. We also give a construction to show that analogues of conjectures of Ryser and Rodney about $k$-plexes hold for $k$-weights. In \S \ref{SEC:NearPlexes} we generalize recent results of Stein and Szab\'o concerning near transversals in Abelian groups to analogous objects related to $k$-weights. We close with \S \ref{SEC:OpenProblems} in which we mention several open questions about $k$-plexes and $k$-weights.

A \define{latin square} of order $n$ is an $n \times n$ array each of whose cells contains a symbol from a fixed $n$-set such that no symbol appears twice in any row or column. We can equivalently think of a latin square $L$ as set of triples with $(x,y,z) \in L \subset [n]^3$ if and only if the cell in row $x$ and column $y$ contains symbol $z$. Cayley tables of finite groups and other algebraically interesting loops provide among the most convenient and structured examples of latin squares. The standard references on latin squares are due to D\'enes and Keedwell \cite{Denes:1974ul,Denes:1991lq} and Laywine and Mullen \cite{MR1644242}.

For latin squares $L$ and $L'$, we say that \define{$L$ has the block pattern of $L'$} if $L$ can be represented by a block matrix $[A_{i,j}]_{1 \leq i, j \leq n}$ where each $A_{i,j}$ is itself a latin square and blocks $A_{i,j}$ and $A_{i',j'}$ contain the same symbols if and only if $L' (i,j) = L' (i',j')$. In such a case, it follows that each block has the same size, say $q$, and we say that $L$ has the \define{$q$-block pattern of $L'$}. When we are concerned only with the parity of $q$, we also refer to the odd-block or even-block pattern. In more classical terminology, a latin square with the $q$-block pattern of $(\ZZ_m, +)$ is said to be of \define{$q$-step type} and order $qm$.

A \define{$k$-plex} is a collection of cells of a latin square that meets each row, column, and symbol precisely $k$ times. The cases $k=1$ and $k=2$ correspond to \define{transversals} and \define{duplexes}, respectively. The study of transversals dates back at least to Euler \cite{Euler:Transversals} and is motivated in part by their intimate connection with mutually orthogonal latin squares. Wanless provides a helpful historical background on the more recent study of $k$-plexes for $k \ge 3$ \cite{Wanless:2002rr}.

Generalizing transversals in another direction, a \define{partial transversal} of length $t$ is a subset of $t$ cells, no two from the same row, column, or symbol, and such a subset is said to be \define{maximal} if it is not properly contained in another partial transversal. When $t$ is one less than the order of the latin square, we call the subset a \define{near transversal}.

Observe that a $k$-plex of a latin square $L$ is equivalent to a map from $L$ to the set $\{0,1\}$ such that the row, column, and symbol sums are all $k$. From this perspective, it is natural to explore the case when the set $\{0,1\}$ is replaced by some other set, say $\ZZ$. It is precisely this case that we consider in this paper. Put another way, the existence of a $k$-plex is equivalent to the satisfaction of a certain $(0,1)$-programming problem, and in this paper we consider the analogous problem over $\ZZ$.

Suppose $\theta: L \rightarrow \ZZ$ is an integral weight function on the cells of $L$. For $k \in \ZZ$, we call $\theta$ a \define{$k$-weight of $L$} if its sum over each row, column, and symbol is $k$. That is, for each index $i$, we have
\[
\sum_{(r,c,i) \in L} \theta(r,c,i) =
\sum_{(r,i,s) \in L} \theta(r,i,s) =
\sum_{(i,c,s) \in L} \theta(i,c,s) = k.
\]
We call $\theta$ a \define{partial $k$-weight} of $L$ with length $t$ if precisely $t$ row, $t$ column, and $t$ symbol sums are $k$ with each remaining sum being $0$. We say that $\theta$ \define{misses} those rows, columns, and symbols whose sums are $0$. When $t$ is one less than the order of $L$, we call $\theta$ a \define{near $k$-weight} of $L$. A partial $k$-weight is said to be $\define{maximal}$ if, as a vector in $\ZZ^{n^2}$, it is not dominated by another partial $k$-weight.

The reader may recognize that partial $k$-weights are closely analogous to $k$-homogenous partial latin squares.

\section{Lemma on Partial $k$-weights of Abelian Groups}\label{SEC:DeltaLemma}

We first recall a lemma of Paige that inevitably comes up in this context.

\begin{lemma}[Paige \cite{Paige:1947jt}]\label{LEM:Paige-Involution}
Suppose $(G, +)$ is a finite Abelian group. If $G$ has a unique involution, then it is equal to $\sum_{g \in G} g$. Otherwise, $\sum_{g\in G} g = 0$.
\end{lemma}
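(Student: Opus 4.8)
The plan is to evaluate $S := \sum_{g \in G} g$ by the standard device of pairing each element with its additive inverse. Let $I := \{g \in G : 2g = 0\}$ be the set of elements equal to their own inverse, i.e. the identity together with the involutions of $G$. On the complement $G \setminus I$ the pairs $\{g, -g\}$ (with $g \neq -g$) partition the set, and each pair contributes $g + (-g) = 0$ to $S$; since $0 \in I$ also contributes nothing, we get $S = \sum_{g \in I} g$. So the problem reduces to summing the elements of $I$, which is precisely the sum of all involutions of $G$.

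Next I would note that $I$ is a subgroup of $G$, being the kernel of the homomorphism $g \mapsto 2g$, and that every nonidentity element of $I$ has order $2$. If $I = \{0\}$, then $G$ has no involution and $S = 0$. If $|I| = 2$, then $G$ has a unique involution $t$ and $S = t$. This already matches the statement in these two cases, so it remains to handle $|I| \geq 3$.

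For $|I| \geq 3$ I claim $S = 0$. Choose two distinct involutions $a, b \in I$; since $a + b = 0$ would force $b = -a = a$, the elements $0, a, b, a+b$ are distinct and $V := \{0, a, b, a+b\}$ is a subgroup of $I$ of order $4$. Decomposing $I$ into cosets of $V$ via Lagrange's theorem, each coset has the form $x + V = \{x, x+a, x+b, x+a+b\}$ with element-sum $4x + 2a + 2b = 0$, using that every element of $I$ has order dividing $2$. Summing over the cosets gives $S = \sum_{g \in I} g = 0$. Finally I would tie the trichotomy back to the phrasing of the lemma: "$G$ has a unique involution" is exactly the case $|I| = 2$, where $S$ equals that involution; the word "otherwise" covers $|I| = 1$ (no involutions) and $|I| \geq 3$ (at least two involutions), and in both of these $S = 0$. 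None of the steps is a real obstacle; the only points requiring care are verifying that $I$ is closed under addition and that two distinct involutions span a Klein four-subgroup rather than collapsing, i.e. that $a + b \neq 0$.
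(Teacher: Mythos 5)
Your proof is correct. The paper itself offers no proof of this lemma --- it is quoted from Paige's 1947 note with a citation only --- so there is nothing to compare against; your argument (cancel each $g$ against $-g$ to reduce to the $2$-torsion subgroup $I$, then kill the sum over $I$ by averaging over cosets of a Klein four-subgroup when there are at least two involutions) is the standard one and is complete. One cosmetic remark: since $I$ is an elementary abelian $2$-group, $|I|$ is a power of $2$, so your case ``$|I| \geq 3$'' is really ``$|I| \geq 4$''; this changes nothing, as your argument only needs two distinct involutions to exist.
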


The following lemma plays a central role in several of our arguments. It is essentially a simplified version of an argument used by Egan and Wanless to show, among many other things, that latin squares with the odd-block pattern of $(\ZZ_{2m}, +)$ do not contain odd-plexes \cite{Egan:2008eu, Wanless:2002rr}. Our contribution has been to show that the argument applies more generally to partial $k$-weights. 

\begin{lemma}\label{LEM:Basic-Sum-Argument}
Suppose $L$ is the Cayley table of an Abelian group $(G, +)$ and $\theta$ is a partial $k$-weight whose missing rows $R$ sum to $r$, missing columns $C$ sum to $c$, and missing symbols $S$ sum to $s$. Then
\[
k(s - r - c) = 
\begin{cases}
\sum_{g \in G} g & \text{if $k$ is odd and $G$ has a unique involution} \\
0 & otherwise.
\end{cases}
\]
Note that when $\theta$ is a $k$-weight (rather than just a partial $k$-weight), $r=c=s=0$ and thus the left-hand side is always $0$.
\end{lemma}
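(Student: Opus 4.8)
The plan is to evaluate, in two ways, the quantity $\displaystyle \Sigma := \sum_{(x,y,z)\in L}\theta(x,y,z)\,(x+y-z)$, where the arithmetic takes place in the $\ZZ$-module $G$. Since $L$ is the Cayley table of $(G,+)$, every triple $(x,y,z)\in L$ satisfies $z=x+y$, so $\Sigma = 0$; on the other hand, expanding by linearity gives $\Sigma = A + B - C$, where $A = \sum\theta(x,y,z)\,x$, $B = \sum\theta(x,y,z)\,y$, and $C = \sum\theta(x,y,z)\,z$, each sum being over $(x,y,z)\in L$.

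Next I would evaluate $A$, $B$, and $C$ using the defining property of a partial $k$-weight. Grouping the terms of $A$ by the row index $x$, the inner sum over the cells of row $x$ is the row-$x$ sum of $\theta$, which is $k$ if $x\notin R$ and $0$ if $x\in R$; hence $A = \sum_{x\notin R} kx = k\bigl(\sigma - r\bigr)$, where $\sigma := \sum_{g\in G} g$. Grouping $B$ by column and $C$ by symbol gives, identically, $B = k(\sigma - c)$ and $C = k(\sigma - s)$. Substituting into $0 = A + B - C$ and simplifying yields $k(s - r - c) = -k\sigma$.

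It then remains to identify $-k\sigma$, which is where Paige's lemma and the parity hypothesis enter. By Lemma~\ref{LEM:Paige-Involution}, if $G$ has no unique involution then $\sigma = 0$ and the right-hand side is $0$. If $G$ has a unique involution $\iota$, then $\sigma = \iota$ and $2\iota = 0$, so $k\iota$ equals $0$ when $k$ is even and equals $\iota$ when $k$ is odd; since $\iota = -\iota$, we conclude $-k\sigma = 0$ for even $k$ and $-k\sigma = \iota = \sigma = \sum_{g\in G} g$ for odd $k$. This matches the stated case distinction exactly. For the closing remark, when $\theta$ is a (non-partial) $k$-weight we have $R = C = S = \emptyset$, so $r = c = s = 0$ and the left-hand side is $k\cdot 0 = 0$.

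I do not anticipate a genuine obstacle: the whole argument rests on the single observation that $x+y-z$ vanishes identically on a group's Cayley table, after which the row, column, and symbol conditions collapse $\Sigma$ into the desired linear relation and Paige's lemma disposes of $\sigma$. The only place demanding care is the last step, where one must remember that $k(s-r-c)$ is an element of $G$ and therefore reduce $-k\sigma$ according to the order of the involution rather than as an ordinary integer multiple.
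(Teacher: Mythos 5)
Your proof is correct and follows essentially the same route as the paper: evaluate the identically zero sum $\sum_{(x,y,z)\in L}\theta(x,y,z)(x+y-z)$ coordinate by coordinate and invoke Paige's lemma. Your explicit handling of the final sign (using $\iota=-\iota$ to turn $-k\sigma$ into $\sigma$) is in fact slightly more careful than the paper's own computation, which silently absorbs that sign.
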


\begin{proof}
Set $u := \sum_{g \in G} g$. First we consider the sum
\begin{align*}
\sum_{(x,y,z) \in L} \theta(x,y,z) (z - x - y)
&= \sum_{(x,y,z) \in L} \theta(x,y,z) z
- \sum_{(x,y,z) \in L} \theta(x,y,z) x
- \sum_{(x,y,z) \in L} \theta(x,y,z) y.
\end{align*}
We will evaluate the left-hand sum by examining each right-hand sum individually but first note that the result must be $0$ since $z - x - y = 0$ for every triple $(x,y,z) \in L$. Grouping the first of the three sums by the $z$ coordinate, we have
\begin{align*}
\sum_{(x,y,z) \in L} \theta(x,y,z) z
&= \sum_{z \in G} \left( \sum_{(x,y,z) \in L} \theta(x,y,z) \right) z \\
&= \sum_{z \in G \setminus S} k z \\
&= \sum_{z \in G} k z - \sum_{z \in S} k z \\
&= ku - k s
\end{align*}
Likewise, we have
\begin{align*}
\sum_{(x,y,z) \in L} \theta(x,y,z) x &= ku - k r \quad \text{and} \\
\sum_{(x,y,z) \in L} \theta(x,y,z) y &= ku - k c.
\end{align*}
Recalling that the original sum must be $0$, we now have
\begin{align*}
0 &= (ku - ks) - (ku - kr) - (ku - k c) \\
&= -ku + k(s - r - c).
\end{align*}
Thus $k(s-r-c) = ku$. By Lemma \ref{LEM:Paige-Involution}, if $k$ is even or $G$ does not have a unique involution, then $ku = 0$. Otherwise, $k$ is odd and $u$ is an involution. Therefore $ku = u$.
\end{proof}

\section{The Existence and Non-existence of $k$-weights}\label{SEC:ExistNonExist}

Ryser conjectured that every latin square of odd order has a transversal. His original, more ambitious conjecture was that the number of transversals of a latin square is equivalent to the square's order modulo $2$. Balasubramanian proved this form of the conjecture for even orders \cite{Balasubramanian:1990dk} but it fails for many latin squares of odd order beginning with $7$. As an aside, Akbari and Alipour have extended Balasubramanian's method to show that in any latin square the number of near transversals is equivalent to $0$ modulo $4$ \cite{MR2079255}.

Rodney \cite[p 105]{Colbourn:1996cj} conjectured that every latin square has a duplex. Both Ryser's and Rodney's conjectures have been shown by computer search to hold at least up to order $9$ by \cite{McKay:2006nx} and \cite{Egan:le}, respectively, and both are subsumed by a stronger conjecture again attributed to Rodney that every latin square can be partitioned into a collection of transversals and duplexes. For a more detailed discussion of the origins of these conjectures one should consult \cite{Egan:le}.

As an aside, a weaker but potentially more tractable question is whether every latin square can be partitioned into proper $k$-plexes, i.e. does every latin square of order $n$ have a $k$-plex for some $k < n$? The answer to even this seemingly weak question remains unknown.

Our first theorem shows that the natural analogues of Ryser's and Rodney's conjectures hold for $k$-weights.

\begin{theorem}\label{THM:k-weight-construction}
Every latin square has a $2$-weight and those of odd order have $1$-weights.
\end{theorem}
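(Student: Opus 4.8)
The plan is to write down one explicit $2$-weight that works for every latin square, and then to obtain the $1$-weight in the odd-order case from it by a divisibility argument, so that transversals and duplexes (whose existence is only conjectural) are never invoked.

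For the $2$-weight, fix a latin square $L$ of order $n$ and put $z_1 = L(1,1)$. Let $R$, $C$, and $U$ be the weights assigning $1$ to each cell of the first row, of the first column, and holding symbol $z_1$ respectively, and $0$ elsewhere, and let $\delta$ assign $1$ to the cell in row $1$ and column $1$ and $0$ elsewhere. I claim
\[
\theta = R + C + U - n\,\delta
\]
is a $2$-weight. Each of $R$, $C$, $U$ contributes exactly $1$ to every line of the two types different from its own, and $0$ to every line of its own type except one; hence a line other than the three distinguished ones (row $1$, column $1$, symbol $z_1$) meets exactly two of $R$, $C$, $U$, misses $\delta$, and so has sum $2$. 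Each distinguished line passes through the cell $(1,1)$, meets all three of $R$, $C$, $U$, and collects the $-n$ from $\delta$; since, say, the row-$1$ line has $R$-sum $n$ and $C$-, $U$-sums $1$, its total is $n+2-n=2$, and symmetrically for column $1$ and symbol $z_1$. The one point where genuine latin-square structure enters — and the place I expect the bookkeeping to need the most care — is the verification for the non-distinguished symbol lines and non-first rows and columns: one uses that the symbol $z_1$, apart from the cell $(1,1)$, lies in neither the first row nor the first column, so each of its remaining $n-1$ cells gets weight $1$. The reason a single correction $-n\,\delta$ suffices is exactly that $(1,1)$ lies simultaneously on the row-$1$, column-$1$, and symbol-$z_1$ lines, so one subtraction repairs all three over-counts while disturbing no other line.

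Finally, to handle $1$-weights, I would note that the set of integers $k$ for which $L$ has a $k$-weight is a subgroup of $\ZZ$: the zero weight is a $0$-weight, the negative of a $k$-weight is a $(-k)$-weight, and the sum of a $j$-weight and a $k$-weight is a $(j{+}k)$-weight. This subgroup contains $n$, since the weight that is $1$ on every cell is an $n$-weight, and it contains $2$ by the construction above; hence it contains $\gcd(n,2)$. Thus every latin square has a $2$-weight, and when $n$ is odd the subgroup is all of $\ZZ$, so $L$ has a $1$-weight — explicitly the weight that is $1$ on every cell minus $\tfrac{n-1}{2}\,\theta$, which is integer-valued because $n$ is odd and all of whose line sums equal $n - \tfrac{n-1}{2}\cdot 2 = 1$. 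No step looks seriously hard; the only real work is the finite case check that $\theta$ is a $2$-weight.
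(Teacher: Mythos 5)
Your proposal is correct and is essentially the paper's own proof: your $R+C+U-n\,\delta$ is exactly the paper's weight that assigns $3-n$ to a fixed cell $(r,c,s)$, assigns $1$ to every cell agreeing with it in precisely one coordinate, and $0$ elsewhere, and your $1$-weight (all-ones minus $\tfrac{n-1}{2}\theta$) is the paper's $\gamma - m\theta$ with $n=2m+1$. The line-by-line verification and the latin-square fact you flag (that the distinguished row, column, and symbol pairwise meet only at the chosen cell) are the same checks the paper performs.
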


\begin{proof}
Fix any cell $(r,c,s) \in L$ and define $\theta: L \rightarrow \ZZ$ as follows:
\[
\theta(x,y,z) =
\begin{cases}
3-n & \text{if $(x,y,z) = (r,c,s)$}\\
1 & \text{if $(x,y,z)$ and $(r,c,s)$ agree in precisely one position}\\
0 & \text{otherwise}.
\end{cases}
\]
We claim that $\theta$ is a $2$-weight of $L$. First consider the sum of $\theta$ over row $x \neq r$. All cells in row $x$ have been assigned $0$ except for the cell in column $c$ and the cell containing symbol $s$. These exceptions must be distinct cells since $L$ is a latin square. Since these two cells both carry a weight of $1$, the sum over row $x$ is $2$. Now consider the sum over row $r$. Every cell in row $r$ carries weight $1$ except for cell $(r,c,s)$, which carries weight $3-n$. Thus the sum over row $r$ is $(n-1) + (3-n) = 2$.

Since our construction treats rows, columns, and symbols symmetrically, it follows that all column and symbol sums are also $2$.

Now suppose $n = 2m+1$ is odd. Note that every latin square has at least one $n$-weight since we may assign $1$ to every cell. Let $\theta$ and $\gamma$ be a $2$-weight and $n$-weight of $L$, respectively. Then $\psi := \gamma - m \theta$ is a $1$-weight of $L$.
\end{proof}

Of course it follows from the proof of Theorem \ref{THM:k-weight-construction} that the existence question for $k$-weights is rather crude. Each latin square has a $k$-weight either for all integers $k$ or for every even $k$. This fact contrasts sharply with the situation in $k$-plexes where the spectrum of existence can be much more subtle. Egan and Wanless, for example, have shown that for every even $n > 2$ there exists a latin square of order $n$ that has no $k$-plex for any odd $k < \lfloor n/4 \rfloor$ but has a $k$-plex for every other $k \le n/2$ \cite{Egan:2008eu}. For further results of this sort consult \cite{Egan:le,Bryant:2009zl,Egan:2010qa}.

Our next theorem has a long history. Euler showed that $(\ZZ_{2m}, +)$ has no transversals \cite{Euler:Transversals}, a century later Maillet showed the same result for any latin square with the odd-block pattern of $(\ZZ_{2m}, +)$ \cite{Maillet:qstep}, and another century passed before Wanless extended the result to odd-plexes \cite{Wanless:2002rr}. We show that the claim holds on the more general level of odd-weights.

\begin{theorem} \label{THM:NonExistence-odd-weights}
Let $L$ and $L'$ be latin squares.
\benum
\item The Cayley table of a finite Abelian group with a unique involution has no odd-weights.
\item If $L$ has no odd-weights and $L'$ has the odd-block pattern of $L$, then $L'$ has no odd-weights.
\eenum
\end{theorem}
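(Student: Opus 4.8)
The plan is to obtain part (i) directly from Lemma~\ref{LEM:Basic-Sum-Argument} and to prove part (ii) by a block-summing argument that pushes a hypothetical odd-weight of $L'$ down to an odd-weight of $L$.

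For part (i): a $k$-weight is a partial $k$-weight missing no rows, columns, or symbols, so in the notation of Lemma~\ref{LEM:Basic-Sum-Argument} we have $r = c = s = 0$, and the left-hand side $k(s - r - c)$ vanishes. If $k$ were odd, then since $G$ has a unique involution the lemma would force $0 = \sum_{g \in G} g$; but by Lemma~\ref{LEM:Paige-Involution} that sum is the unique involution, which is not the identity, a contradiction. Hence no odd $k$-weight exists.

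For part (ii): let $q$ denote the (odd) block size, so $L$ has some order $n$, $L'$ has order $qn$, and $L'$ is represented as $[A_{i,j}]_{1 \le i,j \le n}$ with each $A_{i,j}$ a latin square of order $q$ and with $A_{i,j}$, $A_{i',j'}$ sharing a symbol set exactly when $L(i,j) = L(i',j')$. I would suppose for contradiction that $\psi$ is a $k$-weight of $L'$ with $k$ odd, and define $\Theta \colon L \to \ZZ$ by letting $\Theta(i,j)$ be the total $\psi$-weight of the $q^2$ cells of block $A_{i,j}$. The goal is then to show $\Theta$ is a $(qk)$-weight of $L$. Each block-row $i$ of $L'$ is made up of $q$ genuine rows, each with $\psi$-sum $k$, and these rows meet precisely the blocks $A_{i,1}, \dots, A_{i,n}$; summing gives $\sum_j \Theta(i,j) = qk$, and symmetrically $\sum_i \Theta(i,j) = qk$. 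For the symbol sums, fix a symbol value $\ell$ of $L$: the $q$ symbols of $L'$ in symbol-class $\ell$ occur only inside blocks $A_{i,j}$ with $L(i,j) = \ell$, and inside any such block every cell carries one of those $q$ symbols, so $\sum_{(i,j):\, L(i,j) = \ell} \Theta(i,j)$ equals the total $\psi$-weight on those $q$ symbols, namely $qk$. Thus $\Theta$ is a $(qk)$-weight of $L$; since $q$ and $k$ are both odd, $qk$ is odd, contradicting the hypothesis that $L$ has no odd-weights.

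The one subtle point is the symbol-sum step. It relies on the block-pattern definition forcing each block $A_{i,j}$ to use exactly the $q$-element symbol class indexed by the value $L(i,j)$, so that the cells of $L'$ carrying a symbol in class $\ell$ are exactly the cells lying in blocks with $L$-value $\ell$; granting that, the total $\psi$-weight on class $\ell$ comes out to $qk$ whether it is computed symbol-by-symbol or block-by-block. That the symbol classes partition the symbol set of $L'$ is a standard consequence of $L$ and $L'$ both being latin squares together with the block-pattern definition, and I would record it as a short preliminary observation. Everything else is bookkeeping; the main work is keeping the indexing of the rows, columns, and symbols of $L'$ straight, after which all three sums defining $\Theta$ fall out by the same reasoning.
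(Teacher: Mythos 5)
Your proposal is correct and follows essentially the same route as the paper: part (i) is the immediate application of Lemma~\ref{LEM:Basic-Sum-Argument} (with $r=c=s=0$) together with Lemma~\ref{LEM:Paige-Involution}, and part (ii) is the same block-summing construction pushing a $k$-weight of $L'$ down to a $qk$-weight of $L$. The only difference is that you spell out the symbol-sum verification (and the fact that the symbol classes of the blocks partition the symbol set of $L'$) in more detail than the paper, which simply says ``similarly for columns and symbols''; this is a harmless and arguably welcome elaboration.
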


The primary contribution of Theorem \ref{THM:NonExistence-odd-weights} is to show that the above sequence of results of Euler, Maillet, and Wanless follows from the more general setting of $k$-weights. However, we do show a bit more in that if there exists a latin square $L$ that has no odd-weights and does not have the odd-block pattern of $(\ZZ_{2m}, +)$, then by part (ii) of the theorem this property persists to all squares with the odd-block pattern of $L$. It remains an open question whether such squares exist.

\begin{proof}
(i) Suppose $\theta$ is a $k$-weight of $M$, the Cayley table of a finite Abelian group with unique involution $u$. By Lemma \ref{LEM:Basic-Sum-Argument}, $k$ is even since, otherwise, we immediately have the contradiction that $0 = u$.

(ii) Suppose $L'$ has the $q$-block pattern of $L$ and that $L$ has no odd-weights. Let $L'$ be represented by the block matrix $[A_{i,j}]_{1 \leq i,j \leq m}$ where squares $A_{i,j}$ and $A_{i',j'}$ use the same symbols if and only if $L(i,j) = L(i',j')$. Let $\theta$ be a $k$-weight of $L'$. Define the map $\psi : L \rightarrow \ZZ$ by
\[
\psi (i,j,k) := \sum_{(x,y,z) \in A_{i,j}} \theta(x,y,z).
\]

We now verify that $\psi$ is a $qk$-weight of $L$. Observe that the sum over row $r$ of $L$ equals the sum over $q$ different rows of $L'$. Since each row of $L'$ sums to $k$, the sum over row $r$ in $L$ equals $qk$. Similarly for columns and symbols. Thus $\psi$ is a $qk$-weight of $L$ but since $L$ has no odd-weight, either $q$ or $k$ must be even.
\end{proof}

\section{Near $k$-weights of Abelian Groups}\label{SEC:NearPlexes}

\begin{lemma}[Hall \cite{Hall:1952sf}]\label{LEM:Hall-NearTransversal}
The Cayley table of any finite Abelian group has a near transversal.
\end{lemma}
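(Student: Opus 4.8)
The plan is to recast a near transversal of the Cayley table $L$ of $(G,+)$, where $|G|=n$, as a \emph{near-complete mapping}: a bijection $\phi\colon G\setminus\{a\}\to G\setminus\{b\}$ for which $x\mapsto x+\phi(x)$ is injective on its domain (its image is then $G$ with one symbol deleted). I would then split into two cases according to Lemma~\ref{LEM:Paige-Involution}.

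If $G$ has no unique involution, then $\sum_{g\in G}g=0$, and it is classical (this goes back to Paige, and is the abelian case of what is now called the Hall--Paige theorem) that such a $G$ admits a complete mapping; equivalently $L$ has a full transversal. Deleting any one of its cells leaves $n-1$ cells meeting $n-1$ rows, $n-1$ columns and $n-1$ symbols, which is a near transversal. So this case is immediate.

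The real work is the case in which $G$ has a unique involution $u$, equivalently the Sylow $2$-subgroup of $G$ is cyclic and nontrivial; the structure theorem then gives $G\cong\ZZ_{2^a}\times K$ with $a\ge1$ and $\ell:=|K|$ odd. Grouping the rows, columns and symbols of $L$ into $2^a$ bands according to their $\ZZ_{2^a}$-coordinate decomposes $L$ into $2^a\times2^a$ sub-blocks, each a copy of the Cayley table $L_K$ of $K$; a transversal of $L_K$ placed in block $(i,j)$ covers row-band $i$, column-band $j$ and symbol-band $i+j$. Since $\ell$ is odd, $L_K$ has a full transversal, for instance $\{(x,x,2x):x\in K\}$ (doubling being a bijection of $K$), so the natural first attempt is to combine a near-complete mapping $\phi_1$ of $\ZZ_{2^a}$ (defined on $\ZZ_{2^a}\setminus\{i_0\}$, missing column-band $c_0$, and with symbol-band $s_0$ the band missed by $i\mapsto i+\phi_1(i)$) with a transversal of $L_K$ in each block $(i,\phi_1(i))$, and then to repair the deficiency by inserting $\ell-1$ further cells into row-band $i_0$.

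The main obstacle is exactly this repair. A single extra block will not suffice: a block in row-band $i_0$ sits in some column-band $c$ and hence in symbol-band $i_0+c$, and because $\ZZ_{2^a}$ has no complete mapping one checks that $i_0+c_0\ne s_0$, so the only spare column-band $c_0$ and the only spare symbol-band $s_0$ cannot both be exploited by blocks in row-band $i_0$ without colliding with columns or symbols already used by the main part. The construction therefore has to be re-routed globally, deleting and re-inserting cells across several bands; this is precisely what Hall's argument \cite{Hall:1952sf} carries out, by induction on $|G|$ through a subgroup of prime index and a careful splicing of near transversals across the cosets, the delicate step being index $2$. I would either complete this induction or, since the statement and its proof are classical, simply invoke \cite{Hall:1952sf}.
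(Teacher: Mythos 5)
The paper gives no proof of this lemma at all --- it is imported as a black box from Hall \cite{Hall:1952sf} --- so the only question is whether your proposal stands on its own. Your first case does: when $G$ has no unique involution, Paige's result gives a complete mapping, hence a transversal, and deleting any cell of it is a near transversal. But that is the easy half. In the only case where the lemma has content, namely $G$ with a unique involution (so no transversal exists and the ``delete a cell'' shortcut is unavailable), your block construction is set up, runs into exactly the obstruction you describe (the spare column-band and spare symbol-band cannot both be served from the deficient row-band), and is then abandoned in favour of ``complete the induction or invoke \cite{Hall:1952sf}.'' That is a genuine gap: the case that matters is not proved, and falling back on the citation leaves you with nothing beyond what the paper already asserts.

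If you want a self-contained argument, the efficient route is Hall's theorem itself rather than a hand-built induction: for a finite Abelian group $G$ of order $n$, a sequence $b_1,\dots,b_n$ of elements of $G$ admits an enumeration $a_1,\dots,a_n$ of $G$ such that $a_1+b_1,\dots,a_n+b_n$ is again an enumeration of $G$ if and only if $b_1+\cdots+b_n=0$. Put $u=\sum_{g\in G}g$ and let $b_1,\dots,b_n$ list the elements of $G$ with one term $d$ replaced by $d-u$; the sum becomes $u-u=0$, so Hall's theorem applies, and the $n-1$ triples $(a_i,\,b_i,\,a_i+b_i)$ taken over the unmodified indices have pairwise distinct rows, columns, and symbols --- a near transversal. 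This handles both parities of $u$ uniformly, subsumes your easy case, and avoids the band surgery entirely; it is also the reason the lemma carries Hall's name.
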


\begin{theorem}[Stein and Szab\'o \cite{Stein:2006rz}]\label{THM:Stein-Szabo}
Suppose $L$ is the Cayley table of an Abelian group of order $n$.
\benum
\item Then $L$ has a transversal or a maximal near transversal but not both.
\item If $n$ is prime, then there is no way to select a single cell from each row and column such that precisely two distinct symbols have been selected.
\eenum
\end{theorem}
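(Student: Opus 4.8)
The plan is to derive both parts from tools already in hand: Hall's near-transversal lemma (Lemma~\ref{LEM:Hall-NearTransversal}), the sum identity of Lemma~\ref{LEM:Basic-Sum-Argument} specialized to $k=1$, and Paige's involution lemma (Lemma~\ref{LEM:Paige-Involution}). Notably, I do not expect to need the harder result that a Cayley table of an Abelian group with $\sum_{g}g=0$ actually has a transversal (Paige's theorem on complete mappings).

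For (i), I would first record the maximality bookkeeping: a partial transversal has at most $n$ cells since it meets each row at most once, and a partial transversal with exactly $n$ cells is a transversal; hence a near transversal that is \emph{not} maximal is properly contained in a partial transversal of size $n$, i.e.\ it extends to a transversal. Combined with Lemma~\ref{LEM:Hall-NearTransversal}, which supplies a near transversal $T$, this already shows that $L$ has a transversal or a maximal near transversal (either $T$ is maximal, or it extends to a transversal). For the ``not both'' half, suppose $L$ has a transversal and read it as a $1$-weight (a partial $1$-weight missing nothing). Lemma~\ref{LEM:Basic-Sum-Argument} with $k=1$ forces $0=\sum_{g\in G}g$ if $G$ has a unique involution, which contradicts Lemma~\ref{LEM:Paige-Involution}; so $G$ has no unique involution and $\sum_{g\in G}g=0$. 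Now let $T'$ be any near transversal, missing row $R$, column $C$, symbol $S$; viewing $T'$ as a partial $1$-weight and applying Lemma~\ref{LEM:Basic-Sum-Argument} again gives $S-R-C=0$, i.e.\ $S=R+C$. Then the cell of the Cayley table in row $R$ and column $C$ carries exactly the symbol $S$, so adjoining it to $T'$ yields a transversal; hence $T'$ is not maximal, and $L$ has no maximal near transversal. Together with the previous paragraph this gives exactly one of the two.

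For (ii), write $G=\ZZ_n$ with $n$ prime and suppose the selected cells are $(x,\sigma(x),x+\sigma(x))$ for a permutation $\sigma$ of $G$, realizing exactly two distinct symbols $a\neq b$. Partition $G$ into the nonempty sets $A=\{x : x+\sigma(x)=a\}$ and $B=\{x : x+\sigma(x)=b\}$, so that $\sigma(x)=a-x$ on $A$ and $\sigma(x)=b-x$ on $B$. Surjectivity of $\sigma$ then gives $(a-A)\cup(b-B)=G$, and since $|a-A|+|b-B|=|A|+|B|=n$ this union is disjoint. Disjointness says there is no $x\in A,\ y\in B$ with $a-x=b-y$; rearranged, for every $y\in B$ the element $y+(a-b)$ cannot lie in $A$, hence lies in $B$, so $B+(a-b)\subseteq B$ and therefore (finiteness) $B+(a-b)=B$. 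Thus $B$ is a union of cosets of the cyclic subgroup generated by $a-b$; as $n$ is prime and $a-b\neq 0$, that subgroup is all of $\ZZ_n$, forcing $B=\emptyset$ or $B=G$, a contradiction.

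I expect the only genuine subtlety to lie in part (i): pinning down the maximality argument (that a non-maximal near transversal must extend to a \emph{full} transversal, using its length $n-1$ and the size bound on partial transversals) and keeping the two involution cases straight when invoking Lemmas~\ref{LEM:Basic-Sum-Argument} and~\ref{LEM:Paige-Involution}. Part (ii) is essentially the coset observation together with primality, with the rest routine.
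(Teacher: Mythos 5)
Your proof is correct. Note that the paper does not prove this theorem directly; it states it as Stein--Szab\'o's result and instead proves a $k$-weight generalization, so the natural comparison is with that generalized proof. For part (i) you follow essentially the same route: Lemma~\ref{LEM:Hall-NearTransversal} supplies a near transversal, Lemma~\ref{LEM:Basic-Sum-Argument} (with $k=1$) together with Lemma~\ref{LEM:Paige-Involution} pins the defect $S-R-C$ of the missing triple at either $0$ or the unique involution, and the two cases give extendability versus maximality; your preliminary observation that a non-maximal near transversal must complete to a \emph{full} transversal is exactly the bookkeeping needed to make ``maximal'' usable here, and your organization (a transversal forces $G$ to have no unique involution, which then kills every candidate maximal near transversal) is a clean way to get ``not both.'' For part (ii) you take a genuinely different path. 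The paper again evaluates $\sum_{(x,y,z)\in L}\theta(x,y,z)(z-x-y)=0$ to obtain $i(g-h)=0$ in $G$ and concludes from $\gcd(n,i)=1$; that argument buys the stronger statement for arbitrary integral weights and for composite $n$. Your argument is purely combinatorial: writing the selection as a permutation $\sigma$, splitting the rows into the two symbol classes $A$ and $B$, using injectivity and a counting argument to force $(a-A)$ and $(b-B)$ to be disjoint, and deducing that $B$ is invariant under translation by $a-b$, hence a union of cosets of $\langle a-b\rangle=\ZZ_n$ by primality. This is more elementary (no weight machinery at all) but is tied to the permutation-selection, prime-order setting and would not by itself yield the paper's $\gcd(n,i)$ refinement. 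Both arguments are sound.
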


As stated, Theorem \ref{THM:Stein-Szabo} is somewhat stronger than what \cite{Stein:2006rz} actually contains but our form follows easily from Lemma \ref{LEM:Hall-NearTransversal}, which Stein and Szab\'o use and discuss in their paper. We show that this result is again a more general fact about partial $k$-weights.

\begin{theorem}
Suppose $L$ is the Cayley table of an Abelian group of order $n$.
\benum
\item Then $L$ has a $1$-weight or a maximal near $1$-weight but not both.
\item If $\theta: L \rightarrow \ZZ$ whose row and column sums are all $1$, then it cannot happen that $n-2$ symbol sums are $0$ while the remaining sums are $n-i$ and $i$ with $\gcd(n,i) = 1$.\\
In particular, if $n$ is prime, then it cannot happen that precisely $n-2$ symbol sums are $0$.
\eenum
\end{theorem}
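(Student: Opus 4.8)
The plan is to derive both parts from the group-sum identity behind Lemma~\ref{LEM:Basic-Sum-Argument}: part~(i) from its $k=1$ case, and part~(ii) by rerunning the same computation with the symbol sums left unconstrained. The existence halves will come from Hall's lemma (Lemma~\ref{LEM:Hall-NearTransversal}).

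For part~(i), apply Lemma~\ref{LEM:Basic-Sum-Argument} with $k=1$ to a near $1$-weight $\theta$ of $L$; since $\theta$ misses a single row $r$, column $c$, and symbol $s$, the conclusion reads $s-r-c=u$ with $u=\sum_{g\in G}g$. If $G$ has no unique involution, then $u=0$ by Lemma~\ref{LEM:Paige-Involution} (this includes every odd $n$), so $s=r+c$; hence the cell in row $r$ and column $c$ of the Cayley table already carries symbol $s$, and adding $1$ to it turns $\theta$ into a $1$-weight strictly dominating $\theta$. Thus no near $1$-weight can be maximal, and applying this augmentation to the near transversal supplied by Lemma~\ref{LEM:Hall-NearTransversal} exhibits an actual $1$-weight; so the first alternative holds and the second fails. (As a by-product this recovers Paige's result that an Abelian group with no unique involution has a complete mapping.) If instead $G$ has a unique involution, then $u\neq0$ and $n$ is even: Theorem~\ref{THM:NonExistence-odd-weights}(i) rules out $1$-weights, while Lemma~\ref{LEM:Hall-NearTransversal} gives a near $1$-weight $\theta$. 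This $\theta$ is maximal, because any partial $1$-weight $\theta'$ that dominates it pointwise satisfies $\sum_{\mathrm{cells}}\theta'\ge\sum_{\mathrm{cells}}\theta=n-1$, so $\theta'$ has length $n-1$ or $n$; length $n$ contradicts the absence of $1$-weights, and length $n-1$ forces equal totals, hence $\theta'=\theta$. So the second alternative holds and the first fails, and in both cases exactly one occurs, which is the ``but not both'' assertion.

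For part~(ii), let $\theta\colon L\to\ZZ$ have all row and column sums equal to $1$, and write $w_g$ for the sum of $\theta$ over the cells carrying symbol $g$. Expanding $0=\sum_{(x,y,z)\in L}\theta(x,y,z)(z-x-y)$ and grouping each of the three sums by a coordinate, exactly as in the proof of Lemma~\ref{LEM:Basic-Sum-Argument}, now yields $\sum_{g\in G}w_g\,g=2u$; since $2u=0$ in every case by Lemma~\ref{LEM:Paige-Involution}, we get $\sum_{g\in G}w_g\,g=0$ in $G$, while counting the total weight by rows and then by symbols gives $\sum_{g\in G}w_g=n$. Suppose $n-2$ of the $w_g$ vanish, the remaining two being $w_{g_1}=i$ and $w_{g_2}=n-i$ with $g_1\neq g_2$. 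Then $i\,g_1+(n-i)\,g_2=0$, and since $n g_2=0$ this collapses to $i\,(g_1-g_2)=0$; writing $d\ge2$ for the order of the nonzero element $g_1-g_2$, we get $d\mid i$ and $d\mid n$, so $\gcd(n,i)\ge d>1$, contradicting $\gcd(n,i)=1$. When $n$ is prime, $\gcd(n,i)\neq1$ only if $n\mid i$, which cannot happen once the two nonzero symbol sums lie strictly between $0$ and $n$ --- in particular when $\theta$ is $(0,1)$-valued, so this reproves Theorem~\ref{THM:Stein-Szabo}(ii).

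The step I expect to need the most care is the maximality claim in the unique-involution case of part~(i): Lemma~\ref{LEM:Hall-NearTransversal} yields only a near transversal, and one must exclude domination by \emph{any} integer-valued partial $1$-weight, not merely by another near transversal. The total-weight bookkeeping above settles this, and it is the one place where the passage from $(0,1)$-valued weights to $\ZZ$-valued weights does genuine work; everything else is a direct transcription of the identity already established for Lemma~\ref{LEM:Basic-Sum-Argument}.
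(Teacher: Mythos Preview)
Your proof is correct and follows essentially the same route as the paper: both parts rest on the identity behind Lemma~\ref{LEM:Basic-Sum-Argument}, with Hall's lemma supplying existence in part~(i), and part~(ii) is virtually identical to the paper's computation. The one place you diverge is the unique-involution case of~(i): the paper rules out $1$-weights by observing that subtracting $1$ from any cell of a hypothetical $1$-weight would yield a near $1$-weight with $r+c=s$, contradicting $s-r-c=u\ne 0$, and then asserts maximality directly from $r+c\ne s$; you instead cite Theorem~\ref{THM:NonExistence-odd-weights}(i) and give an explicit total-weight argument for maximality. Your version is arguably more careful, since it actually excludes domination by an arbitrary partial $1$-weight rather than just extension at the single cell $(r,c)$. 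Your caveat on the ``in particular'' clause of~(ii)---that one needs the two nonzero symbol sums to lie strictly between $0$ and $n$, as happens automatically for $(0,1)$-valued $\theta$---is also well taken.
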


\begin{proof}
(i) Let $\theta$ be a near $1$-weight that misses row $r$, column $c$, and symbol $s$. We know such a partial weight exists by Lemma \ref{LEM:Hall-NearTransversal}. It suffices for our purposes to show that whether $\theta$ is maximal depends only on $G$ and not the particular partial weight.  Let $u := \sum_{g \in G} g$. By Lemma \ref{LEM:Basic-Sum-Argument}, 
\[
s - r - c = 
\begin{cases}
u & \text{if $G$ has a unique involution (which then must be $u$)} \\
0 & \text{otherwise}.
\end{cases}
\]
If $s - r - c = 0$, then $r + c = s$, i.e. $(r,c,s)$ is a cell in the Cayley table of $G$ and we may thus extend $\theta$ to a $1$-weight. If $s - r - c = u$, then $r + c \neq s$ and thus $\theta$ is maximal. In the case that $\theta$ is maximal, i.e. $G$ has a unique involution, we should also note that $G$ could not also have a $1$-weight since reducing the weight on any single cell would produce a near $1$-weight that by the preceding argument must be maximal, a contradiction.

(ii)
Suppose $\theta : L \rightarrow \ZZ$ has the property described in the statement of the theorem. In particular, all row and column sums are $1$ and all symbol sums are $0$ with the exception of symbols $g$ and $h$ whose sums are $i$ and $n-i$, respectively. As in the proof of Lemma \ref{LEM:Basic-Sum-Argument}, we evaluate the following trivial expression as three separate sums.
\begin{align*}
0 =\sum_{(x,y,z) \in L} \theta(x,y,z) (z - x - y)
&=  \sum_{(x,y,z) \in L} \theta(x,y,z) z - 2s \\
&= \sum_{(x,y,z) \in L} \theta(x,y,z) z \\
&= \sum_{z \in G} \bigg( \sum_{(x,y,z) \in L} \theta(x,y,z) \bigg) z \\
&= i g + (n-i) h \\
&= i(g - h). \\
\end{align*}
Thus either $g = h$ or the order of the nontrivial group element $g-h$ divides both $i$ and $n$. In the latter case, $\gcd(n,i) > 1$. 
\end{proof}

\section{Open Questions}\label{SEC:OpenProblems}
We introduced the concept of a $k$-weight of a latin square as a potentially useful generalization of a $k$-plex. We showed that several results about transversals and $k$-plexes can be seen as facts about these more general structures and that analogues of well-known conjectures about transversals and duplexes hold at least in the context of $k$-weights. We have yet to resolve at least two basic questions regarding $k$-weights.
\benum
\item We have seen that latin squares with the odd-block pattern of $\ZZ_{2m}$ have no odd-weights. Does the converse hold? That is, does the lack of odd-weights characterize latin squares of the odd-block pattern of $\ZZ_{2m}$? We suspect the answer is no but are not aware of any counter-example. The analogous question for plexes is also open: do there exist latin squares without odd-plexes besides those of odd-step type with an even number of blocks?

\item As we have seen, it was fairly straightforward to settle $k$-weight analogues of Ryser's and Rodney's conjectures by constructing $2$-weights for all latin squares and $1$-weights for the those of odd order. One might hope to find similar constructions for near $1$-weights in every latin square and thereby settle the $k$-weight analogue of Brualdi's conjecture that every latin square has a near transversal. It is interesting that this construction, should it exist, seems to be more difficult than the $1$-weight and $2$-weight constructions.
\eenum

\bibliographystyle{abbrv}

\end{document}